\newcommand{\imp}{\!\rightarrow\!}
\newcommand{\mpn}{\medskip\par\noindent}
\newcommand{\pa}{{\sf PA}}
\newcommand{\proves}{\vdash}
\newcommand{\gn}[1]{\ulcorner {#1} \urcorner }
\newcommand{\lc}[1]{#1\!\!:\!\!}
\newtheorem{Prop}{\bf Proposition}
\newtheorem{Theor}{\bf Theorem}
\newtheorem{Lemma}{\bf Lemma}
\newtheorem{Coro}{\bf Corollary}
\newtheorem{Fact}{\bf Fact.}
\newtheorem{Remark}{\bf Remark}
\newtheorem{Claim}[enumi]{Claim}
\newtheorem{defin}{\bf Definition}
\newtheorem{exam}{\bf Example}
\newtheorem{notat}{\bf Notation.}
\newenvironment{proof}{{\bf Proof.}}{\hfill $\slot$}
\newcommand{\slot}{\hfill \mbox{$\Box$}\vspace{\parskip}\\}
\newtheorem{Comment}{\bf Comment}
\begin{document}

\title{The Provability of Consistency}

\author{Sergei Artemov\\ \\
 {\small The Graduate Center, the City University of New York}\\
{\small  365 Fifth Avenue, New York City, NY 10016}\\
{\small {\tt sartemov@gc.cuny.edu}} }
\date{June 19, 2020}
\maketitle

\begin{abstract}
We offer a mathematical proof of consistency for Peano Arithmetic \pa\ formalizable in \pa.
This result is compatible with G\"odel's Second Incompleteness Theorem since our consistency proof does not rely on the representation of consistency as a specific arithmetical formula. 
Our findings show that formal theories can finitely formalize proofs of certain properties presented as schemes without reducing the presentation of those properties to a single formula. We outline a theory of proving schemes in \pa. 

\end{abstract}

\section{Introduction}
In the 1920s, Hilbert announced a program of establishing consistency of formal mathematical theories by trusted/finitary means (cf., e.g., \cite{Zach16}).  
It is widely believed that G\"odel's Second Incompleteness Theorem undermined Hilbert's program. Here is a typical expression of the impossibility reading of G\"odel's Theorem:
\[ \mbox{\it ``there exists no consistency proof of a system that can be formalized in the system itself"} \]
(Encyclop{\ae}dia Britannica, \cite{EB20}).

For first-order Peano Arithmetic \pa, G\"odel's Second Incompleteness Theorem claims that some arithmetical formula 
\[{\sf Con}_{\sf PA},\]
which {\em can} be read as an internalized consistency assertion, is not derivable in \pa. Together with the widely accepted {\it Formalization Principle}: 
\begin{quote}
{\it any contentual reasoning within the postulates of \pa\
can be internalized as a formal derivation in \pa,}
\end{quote}
and the tacit additional assumption, call it the \textit{``${\sf Con}_{\sf PA}$ as Consistency Principle},"
\begin{quote}
{\it any contentual proof of consistency of \pa\ within the postulates of \pa\
can be internalized as a formal \pa-derivation of the formula ${\sf Con}_{\sf PA},$}
\end{quote}
G\"odel's Second Incompleteness Theorem then would imply that \pa-consistency cannot be established by means of \pa. Likewise, no consistent extension of \pa\ proves its own consistency. 

Yet neither Hilbert nor G\"odel accepted these conclusions.
Hilbert (\cite{HB34}) rejected the impossibility reading of G\"odel's Second Incompleteness Theorem in the context of proving consistency:
\begin{quote}
\textit{``the view ...  that certain recent results of G\"odel show that
my proof theory can't be carried out, has been shown to be erroneous. In fact that result
shows only that one must exploit the finitary standpoint in a sharper way for the farther
reaching consistency proofs."}
\end{quote}
G\"odel directly challenges the Formalization Principle. In \cite{God31}, G\"odel writes:
\begin{quote} {\it 
``it is conceivable that there exist finitary proofs that cannot be expressed in the formalism of [our basic system]."}
\end{quote}

There are definite indications that the late G\"odel remained skeptical about the impossibility reading of his Incompleteness Theorem. 
Gerald Sacks (\cite{Sac07}) recalled G\"{o}del claiming around 1961--1962 that some iteration of Hilbert's consistency program remained feasible. 
G\"{o}del {\it ``did not  think''} the objectives of Hilbert's program 
{\it ``were erased''} by the Incompleteness Theorem, and G\"{o}del believed (according to G.~Sacks) 
 it left Hilbert's program 
 \[ \mbox{\it ``very much alive and even more interesting than it initially was.''} \footnote{Thanks to Dan Willard (\cite{Wil20}) for pointing at this Sacks/G\"odel quotes.} \]


Now we delineate a class of contentual reasoning formalizable in \pa\ that is not excluded by G\"odel's Second Incompleteness Theorem. The above, plus the observation that our consistency proof is a member of this class, renders the impossibility reading of G\"odel's Second Incompleteness Theorem unwarranted.

Consider informal elementary number theory containing recursive identities for $+$, and $\times$ as well as the induction principle; we will call this theory \textit{informal arithmetic}.  Formal arithmetic \pa\ is the conventional formalization of informal arithmetic. Any formal statement of \pa\ can be read as a statement in informal arithmetic. Any proof of such a statement in informal arithmetic in this paper can be naturally formalized as a \pa-derivation of this statement. 

Consider the property of \textit{Complete Induction}, $\cal CI$, 
\[ \mbox{\it for any given formula $\psi$, if for all $x$ $\forall y<x\ \psi(y)$ implies $\psi(x)$, then $\ \forall x\psi(x)$},\]
and the textbook proof of it in informal arithmetic: take an arbitrary $\psi$ and apply the usual induction to $\varphi(x)=$ $\forall y< x\ \psi(y)$ to get the $\cal CI$ statement  ${\cal CI}(\psi)$ for $\psi$.
\medskip\par
This is a correct mathematical proof formalizable in \pa\ using the standard machinery of G\"odel numbering. Indeed, we build a simple \textit{selector} function and a \pa-proof that for any $\psi$, the selector returns a code of a proof of ${\cal CI}(\psi)$.

While the above mathematical proof of the \textit{Complete Induction} property is clearly a proof done by means of informal arithmetic, \textit{Complete Induction} itself cannot be represented by a single formula of \pa, because \pa\ is not finitely axiomatizable. 

This basic example shows that in (un)provability analyses of contentual properties, it is imperative to look beyond single arithmetical formulas for representations of a given property. If we do not allow for this, some classes of formalizable contentual proofs will be excluded from consideration, and some mathematically provable fundamental properties will be left without formal proofs. 

We show that \textit{Consistency} is one such property. 
By G\"odel's Second Incompleteness Theorem, \pa, since consistent, does not prove the specific arithmetical formula ${\sf Con}_{\sf PA}.$
However, consistency of \pa\ in its original formulation, as a property of finite sequences of formulas, can be stated as follows, 
\begin{equation}\label{Hconsistency}
\mbox{\it for any finite sequence $S$ of formulas, $S$ is not a \pa-derivation of $\bot$,}\footnote{$\bot$ is a propositional constant for contradiction which in arithmetical context can be interpreted as the formula $(0\! = \! 1)$.}
\end{equation}
or, equivalently, 
\[ \mbox{\it a given \pa-derivation $S$ does not contain $\bot$}, \]
for its proof does not require internalization as a single arithmetical formula. 

In this paper, we show that \textit{Consistency} admits a direct proof in informal arithmetic and that this proof is formalizable in \pa. For a given \pa-derivaton $S$, we find an arithmetically definable invariant 
\[ {\cal I}_S\]
and establish in informal arithmetic that for each $\varphi$ in $S$, ${\cal I}_S(\varphi)$ holds, ${\cal I}_S(\bot)$ does not hold, hence $\bot$ does not occur in $S$.

Proving the consistency property of \pa\ directly, without \textit{a priori} internalization, avoids limitations imposed by G\"odel's Second Incompleteness Theorem. This result suggests reconsidering the aforementioned popular belief that the consistency of \pa\ cannot be established by means formalizable in \pa. 

As a generalization, we suggest to follow mathematical practices and admit that formalizable proofs of the aforementioned kind, which we call \textit{proofs of schemes}, are admissible formal ways of proving properties in a theory.

\section{Groundwork}

Peano Arithmetic, \pa, is a formal first-order theory containing constant $0$, functions $^\prime$ (successor), $+$, $\times$, and the usual recursive identities for these functions. \textit{Numerals} are terms   
\[ 0, 0^\prime, 0^{\prime\prime}, 0^{\prime\prime\prime}, \ldots \]
representing natural numbers. 
In addition, \pa\ has the standard {\it induction principle}: for each formula $\varphi(x)$, the formula  $\mbox{\it Ind}_\varphi$ is postulated: 
\[ [\varphi(0)\wedge \forall x(\varphi(x)\imp\varphi(x^\prime))] \imp \forall x \varphi(x).
\]
The induction principle \textit{``for each $\varphi$,  $\mbox{\it Ind}_\varphi$"} cannot be represented in \pa\ by a single sentence. Induction in \pa\ is a \textit{scheme} of \pa-formulas $\mbox{\it Ind}_\varphi$ with the parameter $\varphi$. 

Every primitive recursive function is represented in \pa\ by a corresponding (definable) term: for notational convenience we can assume that terms for all primitive recursive functions are already present in the language of \pa\ along with the defining recursive conditions. So, if for a primitive recursive function $f$, $$f(n)=m,$$ then \pa\ proves this fact 
\[ \pa\proves f(n)=m. \] 
Consequently, any primitive recursive relation $R(x_1,\ldots,x_k)$ is naturally represented in \pa\ as well, and $$R(n_1,\ldots,n_k)$$ yields 
\[ \pa\proves R(n_1,\ldots, n_k) .\]

Let $$u:v$$ be the standard (primitive recursive) proof predicate in \pa\ stating 
\[ \mbox{\it $u$ is a code of a \pa-proof of a formula having code $v$.} \] 
In particular, $p$ is a \pa-proof of $\varphi$ iff $\gn{p}:\gn{\varphi}$ holds with $\gn{X}$ denoting the G\"odel number of $X$. We omit notation ``$\gn{\ }$" when safe. Within these conventions, 
\[ \mbox{\it $p$ is a \pa-proof of $\varphi$}\ \ \Leftrightarrow\ \ p:\varphi .\]

\subsection{Foundational view}

Though \pa\ is generally accepted to be consistent, there has been no foundationally clean explanation of \textit{why} \pa\ is consistent. Existing proofs in other theories of the consistency of \pa, some of them intuitively convincing, as well as soundness arguments, rely on yet stronger assumptions, and thus do not address the foundational issue in its strict sense. 

The \pa-consistency definition (\ref{Hconsistency}) is a contentual statement about syntactic objects - formal derivations. 
The traditional approach:
\[  \mbox{\it internalize {\em (\ref{Hconsistency})} as a \pa-sentence and analyze provability of that sentence in \pa},\]  cannot yield consistency of \pa, since an inconsistent theory vacuously proves anything. So, a proof of \pa-consistency should be contentual, which makes an \textit{a priori} formalization of (\ref{Hconsistency}) useless for proving consistency.

A foundationally meaningful proof of the consistency of \pa\ by means of \pa\ would be one that proves the consistency of \pa\ by means of informal arithmetic which includes usual finite combinatorics naturally represented by computable functions/terms. We offer such a proof together with its  complete \textit{a posteriori} formalization in \pa, basically to verify that this proof does not use assumptions from outside \pa. 

From a different perspective, one can assume Mathematics and Logic within the standard university curriculum: proofs, models, soundness, completeness, \textit{etc.} Within this framework, \pa\ is obviously consistent since it has a (standard) model. The question of 
\begin{equation}\label{conPAinPA}
\mbox{\it whether the consistency of \pa\ can be proved by means of informal arithmetic}
\end{equation}
becomes a mathematical problem. 
If nothing else, this is a typical problem of what can be done with limited tools, akin to \textit{doubling the cube} using only a compass and straightedge.
We show that (\ref{conPAinPA}) has an affirmative solution, which also answers the aforementioned foundational question.

\subsection{To internalize or not to internalize consistency \textit{a priori}}
We now discuss the principal bifurcation point. To our knowledge, this appears to have been overlooked in the near-century since the publication of G\"odel's result.
\medskip\par
Traditionally,  the mathematical statement of consistency (\ref{Hconsistency}) is represented in \pa\ by formula 
$\mbox{\sf Con}_{\sf PA}$:
\begin{equation}\label{cformula}
\forall x\ \neg\lc{x}\bot.
\end{equation}
This is a paradigm example of \textit{a priori formalization}. 

By G\"odel's Second Incompleteness Theorem, $\mbox{\sf Con}_{\sf PA}$ is not provable in \pa. Several other formulas naturally representing consistency did, however, turn out to be provable in \pa\ (Rosser, Feferman, Montagna, 
\textit{etc.} \cite{Fef60,Mon78,Ros36,Vis89}). 
It took contentual mathematical judgements to disqualify them in favor of $\mbox{\sf Con}_{\sf PA}$. 

To speak about  provability of ${\sf Con}_{\sf PA}$ in \pa\ semantically, we have to consider formula~(\ref{cformula}) 
in all models of \pa, including nonstandard models. In a given nonstandard model, the quantifier  \textit{``for all $x$"} spills over to nonstandard/infinite numbers, and hence does not match the quantifier \textit{``for any sequence $S$"} from the articulation of Hilbert's consistency in (\ref{Hconsistency}). This is because a finite sequence $S$ of formulas can only have a standard integer code. 
For this reason, G\"odel's Second Incompleteness Theorem cannot have to do with real \pa-derivations, which are all finite.

Moreover, it is well-known that consistency statements 
\[ \neg\lc{0}\bot, \neg\lc{1}\bot, \neg\lc{2}\bot, \neg\lc{3}\bot, \ldots \] hold in all models of \pa, hence only nonstandard/infinite ``proofs" of $\bot$ are possible in \pa-models. This observation demonstrates how internalization distorts the intrinsic nature of consistency and makes it unprovable for a non-essential reason: the language of \pa\ is too weak to sort out fake proof codes. These deficiencies of the language of \pa\  raise questions about using ${\sf Con}_{\sf PA}$ as a fair representation of Hilbert's consistency of \pa\ (\ref{Hconsistency}) and invite logicians to look for alternatives.

We suggest an alternative, as yet unexplored, route: proving consistency directly as a property of finite sequences of formulas, and then formalizing this proof in \pa\ to verify that no principles outside \pa\ have been used in this proof.

\section{Proof of \pa-consistency in informal arithmetic}\label{Proof}

Our proof of consistency for \pa\ proceeds with the following steps:

\begin{enumerate}
\item 
We first present a proof of consistency in its original combinatorial form in informal arithmetic. 
\item 
We then put forth a standard G\"odel  numbers formalization of  proof (1) in \pa. 
\end{enumerate}

\subsection{Partial truth definitions in \pa}

In the metamathematics of first-order arithmetic, there are well-known constructions called \textit{partial truth definitions}, cf. \cite{Buss98, HP17, Pud98, Smo85}. Namely, for each $n=0,1,2,\ldots$ we build, in a primitive recursive way, a $\Sigma_{n+1}$-formula 
\[ \mbox{\it Tr}_n(x,y), \]
called the \textit{truth definition for $\Sigma_{n}$-formulas}, which satisfies natural properties of a truth predicate formulated in Proposition~\ref{p1}.
Intuitively, when $\varphi$ is a $\Sigma_{n}$-formula and $y$ is a sequence encoding values of the parameters in $\varphi$, then $\mbox{\it Tr}_n(\gn{\!\varphi\!},y)$ outputs the truth value of $\varphi$ on $y$. 

\begin{Prop}\label{p1}
{\em (cf. \cite{Buss98, HP17, Pud98, Smo85})}
The following three claims hold: 
\begin{enumerate}
\item $\mbox{\it Tr}_n(\gn{\!\varphi\!},y)$ satisfies the usual properties of truth with respect to Boolean connectives, quantifiers, and \textit{Modus Ponens} for each $\varphi\in\Sigma_n$, and these properties are derivable in \pa.
\item For any $\Sigma_{n}$-formula $\varphi$, \pa\ naturally proves Tarksi's condition:
\[ \mbox{\it Tr}_n(\gn{\!\varphi\!},y)\ \Leftrightarrow\ \varphi(y).\] 
In particular,  $\neg\mbox{\it Tr}_n(\gn{\bot},y)$ is naturally provable, i.e., \pa\ proves that the formula $\bot$ does not satisfy  $\mbox{\it Tr}_n$.
\item For any axiom $A$ of \pa\ from $\Sigma_{n}$, $\mbox{\it Tr}_n(\gn{\!A\!},y)$ is provable.
\end{enumerate}

\end{Prop}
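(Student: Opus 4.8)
The plan is to construct $\mbox{\it Tr}_n$ explicitly, for each fixed $n$, by arithmetizing Tarskian satisfaction up to quantifier-depth $n$, and then to read off the three claims from the defining clauses. First I would fix a G\"odel numbering of the syntax together with primitive recursive predicates recognizing variables, terms, bounded ($\Delta_0$-) formulas, and $\Sigma_k$-formulas for $k\le n$, and a primitive recursive term-evaluation function $\mbox{\it val}(t,y)$ computing the value of a term $t$ under the assignment coded by $y$; all the defining equations of these primitive recursive objects are provable in \pa\ by the conventions of Section~2. The heart of the base case is a satisfaction predicate $\mbox{\it Sat}_0(x,y)$ for bounded formulas: using course-of-values recursion over the (code-bounded) tree of subformulas, $\mbox{\it Sat}_0$ is $\Delta_1$ in \pa, and \pa\ proves that it obeys the recursive Tarski clauses for atomic formulas, $\neg,\wedge,\vee$, and bounded quantifiers. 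I would then lift this to the prenex hierarchy: $\mbox{\it Tr}_n$ is defined by stripping the (at most $n$) leading unbounded quantifier blocks of a $\Sigma_n$-formula, asserting the matching alternation $\exists\,\forall\,\exists\cdots$ of genuine number quantifiers over assignments extending $y$, and bottoming out in $\mbox{\it Sat}_0$ on the bounded matrix. Counting alternations shows $\mbox{\it Tr}_n\in\Sigma_{n+1}$.

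Second, Claim~(1) falls out almost by definition: the clauses for $\neg,\wedge,\vee,\imp$ and for quantifiers are exactly the defining cases of $\mbox{\it Tr}_n$, so the commutation biconditionals, and the \textit{Modus Ponens} rule $\mbox{\it Tr}_n(\gn{\f},y)\wedge\mbox{\it Tr}_n(\gn{\f\imp\psi},y)\imp\mbox{\it Tr}_n(\gn{\psi},y)$, are provable in \pa\ by unwinding the recursion and invoking the provable defining equations for the coding functions. Care is needed only to keep every formula occurring inside a block within $\Sigma_n$, so that the same predicate $\mbox{\it Tr}_n$ applies.

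Third, Claim~(2), the partial Tarski biconditional, is proved by an \emph{external} induction on the build-up of $\f$: for each fixed $\Sigma_n$-formula $\f$ I produce a \pa-proof of $\mbox{\it Tr}_n(\gn{\f},y)\leftrightarrow\f(y)$, the base case being the agreement of $\mbox{\it Sat}_0$ with term evaluation on bounded matrices, and the inductive step being an immediate appeal to Claim~(1). It is essential that this induction runs in the metatheory, one \pa-proof per formula, rather than inside \pa; this is precisely what keeps the construction clear of Tarski's undefinability theorem. In particular, since $\bot$ abbreviates the false bounded sentence $0\!=\!1$, the base case yields $\pa\proves\neg\mbox{\it Tr}_n(\gn{\bot},y)$. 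Finally, Claim~(3) is a corollary: if $A$ is a $\Sigma_n$-axiom of \pa\ then $\pa\proves A$, and combining this with the instance $\mbox{\it Tr}_n(\gn{A},y)\leftrightarrow A(y)$ from Claim~(2) gives $\pa\proves\mbox{\it Tr}_n(\gn{A},y)$.

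I expect the main obstacle to be the base case, namely the careful arithmetization of $\mbox{\it Sat}_0$ for bounded formulas and the verification, inside \pa, that it satisfies the recursive satisfaction clauses. This requires formalizing course-of-values recursion over the bounded tree of subformulas and proving existence and uniqueness of the satisfaction sequence; it is routine but is the most technical ingredient, whereas everything above it is bookkeeping about quantifier complexity and a structural induction using Claim~(1) as a black box. Since the whole construction is standard, I would present the base case in outline and defer the full verification to the cited \cite{Buss98, HP17, Pud98, Smo85}.
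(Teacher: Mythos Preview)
Your proposal is correct and follows the standard construction of partial truth predicates. Note, however, that the paper does not actually supply its own proof of this proposition: it is stated with the parenthetical ``(cf.\ \cite{Buss98, HP17, Pud98, Smo85})'' and treated as a known result from the literature, followed only by the remark that the proofs involved are contentual arguments within informal arithmetic. Your sketch is precisely the textbook argument one finds in those references, so in effect you have filled in what the paper deliberately leaves as a citation.
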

Note that all proofs used in the demonstration of Proposition~\ref{p1} can be regarded as rigorous contentual arguments within informal arithmetic. The formal language of \pa\ is used here just for bookkeeping.

\subsection{A proof of \pa-consistency in informal arithmetic}\label{proof}

{\bf Phase 1}: We prove \pa-consistency in its original combinatorial form 
\[ \mbox{\it any given \pa-derivation $S$ does not contain $\bot$.}\]
\begin{itemize}
\item Given a finite \pa-derivation $S$, we first calculate $n$ such that all formulas from $S$ are from $\Sigma_{n}$. 
\item Then, by induction up to the length of $S$, we check that for any formula $\varphi$ in $S$ with parameters $y$, the property $\mbox{\it Tr}_n(\gn{\!\varphi\!},y)$ holds. This is an immediate corollary of Proposition \ref{p1}, since each axiom from $S$ satisfies $\mbox{\it Tr}_n$, and each rule of inference respects $\mbox{\it Tr}_n$. So, $\mbox{\it Tr}_n$ serves as an invariant for formulas from $S$. 
\item By Proposition \ref{p1}, $\bot$ does not satisfy $\mbox{\it Tr}_n$, hence is not in $S$. 
\end{itemize}
\hfill $\Box$
\mpn
This is a rigorous contentual proof of the consistency of \pa. Constructions  and required properties used in this argument are formalizable in \pa: partial truth definitions, compliance of truth definitions with \pa-derivation rules, {\it etc}. 
\mpn
{\bf Phase 2}:  We formalize phase 1.

Here is a description of a primitive recursive function $p(x)$, called \textit{selector}, connecting a numeral $d$ with the \pa-proof $p(d)$ of $\neg\lc{d}\bot$. All quantifiers used in the description of the procedure are bounded by given primitive recursive functions of $d$, the G\"odel number of a \pa-derivation $S$.

Given $d$ we first calculate $n=n(d)$ such that all formulas from $S$ are from $\Sigma_{n}$. 
For any formula $\varphi$ in $S$, starting with axioms, by induction up to the length of $S$, we build a \pa-proof of $\mbox{\it Tr}_{n}(\gn{\!\varphi\!},y)$. 
Since, by Proposition 1, \pa\ proves $\neg\mbox{\it Tr}_{n}(\gn{\bot},y)$, we construct a proof that $\bot$ is not in $S$. 

By this description, $p(x)$ is primitive recursive, and \pa\ proves 
\begin{equation}\label{ConS}
\forall x[\lc{p(x)}\neg \lc{x}\bot].
\end{equation}
Once again, the \pa-proof of (\ref{ConS}) is a formal certification that a given earlier contentual proof of \pa-consistency is formalizable in \pa. 


In this proof, $\mbox{\it Tr}_n$ serves as an invariant for formulas from $S$. G\"odel's Second Incompleteness Theorem does not rule out the possibility of having an invariant ${\cal I}_S$ for each $S$, but prohibits having such an invariant ${\cal I}$ uniformly for all derivations $S$. 

Indeed, suppose there is an arithmetical formula ${\cal I}(x,y)$ such that \pa\ proves 
\[  \forall x,y[\lc{x}y \imp  {\cal I}(x,y)] \]
and 
\[ \forall x\ \neg {\cal I}(x,\bot).\]
Then \pa\ proves 
\[ \forall x\ \neg \lc{x}\bot , \]
which is the internalized consistency formula ${\sf Con}_{\sf PA}$, and this is impossible by G\"odel's Second Incompleteness Theorem. 



\subsection{How far we can go}
As was shown in Section~\ref{proof}, informal arithmetic proves the consistency of \pa. Using this method, for which $T$ that extend \pa\ can we prove the consistency of $T$ in informal arithmetic? For example, can informal arithmetic prove the consistency of $\pa+ {\sf Con}_{\sf PA}$?  The answer is ``no."

Let  $\lc{x}_{\mbox{\it\tiny T}}\ \!\varphi$ be a shorthand of the natural arithmetical formula for the primitive recursive proof predicate in a theory $T$\footnote{We drop the subscript when $T=\pa$.}:
\[ \mbox{\it ``$x$ is a code of a proof of formula $\varphi$ in $T$," }\]
and $\Box_{\mbox{\it\tiny T}}\ \!\varphi$ denote {\it ``$\varphi$ is provable in $T$,"} i.e., $\exists x (\lc{x}_{\mbox{\it\tiny T}}\ \!\varphi)$. Then  ${\sf Con}_{\mbox{\it\tiny T}}$ is $\neg\Box_{\mbox{\it\tiny T}}\bot$. 

As a byproduct of our consistency proof, we can derive in \pa\ the {\it constructive consistency formula}, ${\sf CCon}_{\mbox{\it\tiny T}}$ (for $T=\pa$):
\[ \forall x\ \Box_{\mbox{\it\tiny T}} \neg \lc{x}_{\mbox{\it\tiny T}}\bot .
\] 
Studying provability of ${\sf CCon}_{\mbox{\it\tiny T}}$ turned out to be useful in the analysis of consistency proofs. 

Within the current approach to proving \pa-consistency we prove ${\sf CCon}_{\sf PA}$ too. Indeed, in \pa, from (\ref{ConS}), i.e., $\forall x\ \lc{p(x)}\neg \lc{x}\bot$, we immediately get the desired
\[ \forall x\exists y\ \lc{y} \neg \lc{x}\bot .
\] 

As was noticed by Sinclaire \cite{Sin19} and Kurahashi~\cite{Kur19}, independenly,  
\[ \mbox{\it \pa\ does not prove ${\sf CCon}_{\mbox{\it\tiny T}}$ for any $T\supseteq \pa+ {\sf Con}_{\sf PA}$}. \]
Indeed, suppose \pa\ proves ${\sf CCon}_{\mbox{\it\tiny T}}$ for $T = \pa+ {\sf Con}_{\sf PA}$. Then, equivalently,
\begin{equation}\label{?!?}
\pa\proves\forall x\Box\neg\lc{x}\Box\bot. 
\end{equation}
This does not hold, since otherwise $\pa\proves\Box\Box\bot\imp\Box\bot$, (which is not the case). Indeed, reason in \pa\ and assume $\Box\Box\bot$, i.e., $\exists x(\lc{x}\Box\bot)$. 
By a strong form of provable $\Sigma_1$-completeness,
$$\pa\proves \lc{x}\Box\bot \imp\Box (\lc{x}\Box\bot),$$
and we would have $\exists x\Box \lc{x}\Box\bot$. From (\ref{?!?}), we get $\Box\bot$. 

This could be loosely interpreted as stating that though informal arithmetic proves the consistency of \pa, it cannot prove consistency of $\pa + {\sf Con}_{\sf PA}$ by the given method.

\section{Proving properties in a general setting: schemes}\label{shemy}

We now present the previous argument in general proof-theoretical terms. A retroactive analysis shows that the presented \pa-consistency proof can be viewed as a proof in \pa\ of a corresponding arithmetical scheme. 

\begin{defin}
Let $\varphi$ be an arithmetical formula with a designated variable, or a vector of variables, $x$. A \textit{scheme} determined by $\varphi$ is a syntactic figure $\{\varphi\}$. A {\bf proof of a scheme 
$\{\varphi\}$} in \pa\ is a pair $\langle t, p\rangle$ where 
\begin{itemize}
\item $t$ is a primitive recursive term (called \textit{selector}),
\item $p$ is a \pa-proof of  $\ \forall x [t(x)\!:\!\varphi(x)]$. 
\end{itemize} 
\end{defin}
A similar approach works for schemes with non-numeral parameters. 
Let $S$ be a scheme of arithmetical formulas with parameter $\psi$ (think of \textit{Complete Induction}) and $s(x)$ a natural arithmetical term for the primitive recursive function that given a code of $\psi$ computes the code of $S(\psi)$. 
{\bf A proof of $S$ in \pa} is a pair $\langle t, p\rangle$ where $t$ is a primitive recursive term (\textit{selector}) and 
$p$ is a \pa-proof of  $\forall x [t(x)\!:\! s(x)]$.

\begin{defin}
A scheme $\{\varphi\}$ is 
\begin{itemize}
\item {\bf provable in \pa} if it has a proof in \pa, 
\item {\bf strongly provable in \pa} if $\pa\proves\forall x \varphi(x)$, 
\item {\bf weakly provable in \pa} if $\pa\proves \varphi(n)$, for each $n=0,1,2, \ldots$. 
\end{itemize}
\end{defin}


The principal idea of proving a scheme $\{\varphi\}$ is to represent in a finite way the contentual mathematical reasoning that certifies the collection of individual statements 
\begin{equation}\label{inf}
 \varphi(0),\ \varphi(1),\ \varphi(2),\ \ldots \ .
\end{equation}

As has been noted, reducing this problem to that of proving the internalized version of (\ref{inf}),   $\forall x \varphi(x)$, smuggles in nonstandard (infinite) numbers. For certain delicate properties, like \textit{Consistency}, this distorts the problem in an unacceptable way. There is no difference between (\ref{inf}) and $\forall x\varphi(x)$ semantically in the standard model of \pa, but proof methods are profoundly sensitive to their difference. 

\begin{Prop} 
$\ $ 

a) ``Strongly provable" {\em yields} ``Provable."

b) ``Provable" {\em yields} ``Weakly provable."
\end{Prop}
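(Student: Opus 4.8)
The plan is to prove each implication by an explicit primitive recursive construction that is then certified inside \pa, relying on the decidability of the proof predicate $u\!:\!v$ and the fact recalled earlier that \pa\ proves every true closed primitive recursive statement (and hence refutes every false one, since the negation of a primitive recursive relation is again primitive recursive and true at the relevant arguments). Throughout, following the paper's convention, $t(x)\!:\!\varphi(x)$ denotes the proof predicate relating $t(x)$ to the code $\gn{\varphi(\num{x})}$, both primitive recursive functions of $x$.

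For part (a), assume $\pa\proves\forall x\,\varphi(x)$, and fix a standard code $q$ of an actual \pa-derivation of $\forall x\,\varphi(x)$ (such a finite derivation exists). I would define the selector so that $t(x)$ is the code of the derivation obtained from $q$ by appending two lines: the instantiation axiom $\forall x\,\varphi(x)\imp\varphi(\num{x})$ and, by \textit{Modus Ponens}, the formula $\varphi(\num{x})$. Building the numeral $\num{x}$, performing the substitution, and concatenating the derivation are all primitive recursive in $x$, so $t$ is a legitimate selector. It then remains to establish $\pa\proves\forall x[t(x)\!:\!\varphi(x)]$, i.e.\ that \pa\ verifies, uniformly in $x$, that $t(x)$ really is a derivation whose last formula has code $\gn{\varphi(\num{x})}$.

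To establish this uniform claim, I would reason inside \pa\ for an arbitrary $x$. Since $q$ is a constant genuinely coding a proof of $\forall x\,\varphi(x)$, \pa\ proves $q\!:\!\gn{\forall x\,\varphi(x)}$. \pa\ also proves, uniformly in $x$, that the appended formula $\forall x\,\varphi(x)\imp\varphi(\num{x})$ satisfies the primitive recursive predicate ``is a logical instantiation axiom'' and that the final line is a correct \textit{Modus Ponens} step; these are exactly the formalized correctness properties of the proof-concatenation apparatus underlying the standard derivability conditions. Chaining these verified facts shows that the sequence $t(x)$ meets every clause of the proof predicate with conclusion $\gn{\varphi(\num{x})}$, which is precisely $t(x)\!:\!\varphi(x)$. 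Discharging $x$ yields $\pa\proves\forall x[t(x)\!:\!\varphi(x)]$, so $\langle t,p\rangle$ witnesses provability of the scheme.

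For part (b), assume the scheme is provable, witnessed by a selector $t$ and a \pa-proof of $\forall x[t(x)\!:\!\varphi(x)]$. Fix $n$ and compute the concrete value $m=t(n)$; since $t$ is primitive recursive, $\pa\proves t(\num{n})=\num{m}$, and instantiating the witnessing proof at $\num{n}$ gives $\pa\proves\num{m}\!:\!\gn{\varphi(\num{n})}$. Because the proof predicate is primitive recursive, either $m$ genuinely codes a \pa-derivation of $\varphi(\num{n})$---in which case that very derivation gives $\pa\proves\varphi(\num{n})$---or it does not, in which case $\num{m}\!:\!\gn{\varphi(\num{n})}$ is a false closed primitive recursive statement, so $\pa\proves\neg(\num{m}\!:\!\gn{\varphi(\num{n})})$; combined with $\pa\proves\num{m}\!:\!\gn{\varphi(\num{n})}$ this makes \pa\ inconsistent, whence $\pa\proves\varphi(\num{n})$ trivially. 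In either case $\pa\proves\varphi(\num{n})$, and since $n$ was arbitrary the scheme is weakly provable. I expect the main obstacle to be the uniform verification in part (a): unlike part (b), which handles one numeral at a time and reduces to the decidability of $u\!:\!v$ plus the observation that an inconsistent \pa\ proves everything, part (a) requires \pa\ to certify the appended instantiation and \textit{Modus Ponens} steps \emph{for all} $x$ at once, which is the genuinely formal content and rests on the provable correctness of the primitive recursive proof-building machinery.
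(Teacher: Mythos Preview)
Your proof is correct and follows essentially the paper's approach. For part (a) you simply spell out the ``easy transformation of proofs'' (appending an instantiation axiom and a \textit{Modus Ponens} step to the fixed derivation $q$) that the paper leaves implicit, and you correctly identify that the uniform-in-$x$ verification inside \pa\ is the real content. For part (b) the argument is the same in outline --- instantiate at $n$ and use decidability of the proof predicate --- but with one difference worth noting: the paper rules out the case where $t(n)\!:\!\varphi(n)$ fails externally by invoking the consistency of \pa\ established earlier in Section~\ref{proof}, concluding that $t(n)$ must really be a derivation of $\varphi(n)$; you instead observe that even in the failing case \pa\ would be inconsistent and hence would prove $\varphi(n)$ trivially, so either way $\pa\proves\varphi(n)$. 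Your variant is therefore self-contained and does not depend on the paper's main consistency result, which is a small but genuine simplification.
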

\begin{proof}
a) Suppose $q$ is a proof of $\forall x \varphi(x)$, i.e., 
\[  \lc{q}\forall x \varphi(x)).\]
By an easy transformation of proofs, find a primitive recursive term $t$ such that \pa\ proves 
\[ \forall x[\lc{t(x)}\varphi(x)].\]
Let $p$ be a \pa-proof of the latter, 
\[ \lc{p}\forall x[\lc{t(x)}\varphi(x)].\]
\par
b) Reason in informal arithmetic. From $\lc{p}\forall x[\lc{t(x)}\varphi(x)]$ and $n$ get $p^\prime$  such that
 \[\lc{p^\prime}[\lc{t(n)}\varphi(n)].\] 
If $t(n)\!:\!\varphi(n)$ does not hold, by completeness of \pa\ with respect to primitive recursive conditions, we would have  $\pa\proves \neg t(n)\!:\!\varphi(n)$, i.e., $\pa\proves\bot$, which was shown earlier to be impossible. Therefore, $t(n)$ is a \pa-derivation of $\varphi(n)$. 
\end{proof}
\begin{Coro} Informal arithmetic proves that proofs of schemes in \pa\ are consistent, i.e., \pa\ does not prove schemes containing $\bot$.
\end{Coro}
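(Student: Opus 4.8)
The plan is to obtain this as an immediate consequence of part (b) of the preceding Proposition together with the consistency of \pa\ already established in Section~\ref{proof}. First I would fix the reading of ``a scheme containing $\bot$'': a scheme $\{\varphi\}$ contains $\bot$ when one of its instances $\varphi(n)$ is literally the formula $\bot$, in parallel with the earlier locution that a \pa-derivation $S$ ``contains $\bot$'' when $\bot$ occurs among the formulas of $S$. Under this reading the statement to be proved is: if $\{\varphi\}$ is provable in \pa, then no instance $\varphi(n)$ equals $\bot$.

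The argument would proceed by contradiction. Suppose $\{\varphi\}$ is provable in \pa, witnessed by a pair $\langle t,p\rangle$ with $p$ a \pa-proof of $\forall x[\lc{t(x)}\varphi(x)]$, and suppose further that the scheme contains $\bot$, say $\varphi(m)$ is $\bot$ for some numeral $m$. By part (b) of the Proposition, ``Provable'' yields ``Weakly provable,'' so $t(m)$ is a genuine \pa-derivation of $\varphi(m)$, i.e.\ of $\bot$; hence $\pa\proves\bot$. But the consistency of \pa, proved contentually in informal arithmetic in Phase~1, asserts precisely that no \pa-derivation contains $\bot$, so in particular $t(m)$ cannot derive $\bot$. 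This contradiction shows that no instance of a provable scheme can be $\bot$.

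I would then note that the entire derivation stays within informal arithmetic: part (b) was itself established there (via completeness of \pa\ with respect to primitive recursive conditions), and the consistency fact invoked in the last step is the Phase~1 combinatorial statement, which is contentual. Thus informal arithmetic proves that provable schemes are free of $\bot$, which is the asserted consistency of proofs of schemes.

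As for the main obstacle, there is little genuine difficulty here, since the result is a corollary and the substantive work was already done in the Proposition and in the consistency proof. The single point requiring care is bookkeeping about levels of assertion: one must keep the meta-level claim (``informal arithmetic proves\ldots'') distinct from object-level \pa-provability, and ensure that the consistency fact used is the combinatorial Phase~1 statement rather than the internalized ${\sf Con}_{\sf PA}$, whose unprovability in \pa\ would otherwise appear to obstruct the argument.
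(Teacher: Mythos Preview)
Your proposal is correct and matches the paper's intended reasoning: the corollary is stated without proof precisely because it is the obvious combination of Proposition~2(b) (``Provable'' implies ``Weakly provable,'' whose proof already invokes the Phase~1 consistency of \pa) with the observation that a provable instance equal to $\bot$ would make \pa\ inconsistent. Your care in distinguishing the contentual Phase~1 consistency from the internalized ${\sf Con}_{\sf PA}$ is exactly the point the paper wants preserved.
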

\begin{Coro} \textit{Proving a sentence is a special case of proving a scheme}.  
\end{Coro}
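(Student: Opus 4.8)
The plan is to identify an ordinary sentence $\sigma$ with the \emph{constant scheme} $\{\f\}$ in which $\f(x)$ is taken to be $\sigma$ and $x$ is a designated variable not occurring in $\sigma$. Under this identification I would show that the scheme $\{\sigma\}$ is provable in \pa, in the sense of the Definition of a proof of a scheme, precisely when $\sigma$ is provable in \pa\ in the ordinary sense. This exhibits sentence-provability as the restriction of scheme-provability to constant schemes, which is exactly what ``special case'' should mean; as a bonus it shows that for such schemes the notions \emph{provable}, \emph{strongly provable}, and \emph{weakly provable} all collapse to $\pa\proves\sigma$.

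The heart of the argument is the forward direction, which supplies the embedding. Starting from a \pa-proof of the sentence, say $\lc{p_0}\sigma$, I would build a proof of the scheme by taking the \emph{constant selector} $t(x)=p_0$. Since constant functions are primitive recursive, $t$ is a legitimate selector term. The required condition $\lc{t(x)}\sigma$ then reduces to the true primitive recursive statement $\lc{p_0}\sigma$; by completeness of \pa\ with respect to primitive recursive conditions, \pa\ proves $\lc{p_0}\sigma$, and since neither $p_0$ nor $\sigma$ contains $x$, a vacuous generalization yields a \pa-proof $p$ of $\forall x[\lc{t(x)}\sigma]$. The pair $\langle t,p\rangle$ is then a proof of the scheme $\{\sigma\}$ in the prescribed sense.

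For the converse (faithfulness) I would invoke the preceding Proposition, part (b), ``Provable yields Weakly provable'': if $\langle t,p\rangle$ proves the scheme $\{\sigma\}$, then $\pa\proves\f(0)$ for the underlying formula $\f(x)=\sigma$, and since $\f$ is constant in $x$ this is exactly $\pa\proves\sigma$. Combining the two directions shows that the assignment $\sigma\mapsto\{\sigma\}$ faithfully embeds the provable sentences into the provable schemes. I do not expect a genuine obstacle here: the whole argument rests on two facts already in hand, namely that constant functions are primitive recursive terms and that \pa\ is complete for true primitive recursive conditions, so that the vacuously quantified $\forall x[\lc{t(x)}\sigma]$ is \pa-provable. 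The only point needing a moment's care is the bookkeeping that the designated variable $x$ is genuinely vacuous, which is what makes the three flavors of provability coincide with plain $\pa\proves\sigma$.
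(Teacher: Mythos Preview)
Your proposal is correct and matches the paper's intended argument. The paper does not spell out a proof of this corollary; it is meant to follow immediately from Proposition~2(a) (``strongly provable'' yields ``provable''), since for a sentence $\sigma$ with vacuous designated variable the condition $\pa\proves\forall x\,\sigma$ is just $\pa\proves\sigma$, and the selector produced in that proof is precisely your constant $t(x)=p_0$. Your added converse via Proposition~2(b) and the remark that the three notions collapse are not in the paper's (absent) proof but are consonant with the sentence that follows the corollary (``Proving schemes does not add any new theorems \ldots'').
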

Proving schemes does not add any new theorems but rather presents provable formulas in groups, that is, as \textit{schemes}. Formulas from a scheme can be ``concurrently" certified by one finite proof of a scheme.

\begin{Prop}\label{p3}
$\ $ 

a) ``Weakly provable" {\em does not yield} ``Provable."

b) ``Provable" {\em does not yield} ``Strongly provable."
\end{Prop}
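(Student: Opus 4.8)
The plan is to separate the three notions using consistency schemes lying one theory apart, which dovetails with the paper's theme. For part (b) I would take the PA-consistency scheme $\{\neg\lc{x}\bot\}$ itself. It is provable, and essentially for free: equation (\ref{ConS}) already supplies a primitive recursive selector $p$ with $\pa\proves\forall x[\lc{p(x)}\neg\lc{x}\bot]$, so $\langle p, r\rangle$, where $r$ is a PA-proof of (\ref{ConS}), is a proof of the scheme in the sense of the definition. Yet this scheme is not strongly provable, since strong provability would mean $\pa\proves\forall x\,\neg\lc{x}\bot$, which is exactly ${\sf Con}_{\sf PA}$, unprovable by G\"odel's Second Incompleteness Theorem. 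Thus (\ref{ConS}) and G2 are precisely the two halves of (b).

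For part (a) I would go one theory up and take the consistency scheme of $T=\pa+{\sf Con}_{\sf PA}$, namely $\{\neg\lc{x}_{\mbox{\it\tiny T}}\bot\}$. Weak provability is the easy direction: since PA is consistent, ${\sf Con}_{\sf PA}$ holds in the standard model, so $\mathbb{N}\models T$ and $T$ is consistent; hence no numeral $n$ codes a $T$-derivation of $\bot$, each instance $\neg\lc{n}_{\mbox{\it\tiny T}}\bot$ is a true primitive recursive sentence, and by completeness of PA for primitive recursive conditions PA proves it. The substantive direction is non-provability, which I would establish by contradiction: a proof of the scheme would yield a primitive recursive selector $t$ with $\pa\proves\forall x[\lc{t(x)}\neg\lc{x}_{\mbox{\it\tiny T}}\bot]$; quantifying out the witness gives $\pa\proves\forall x\,\Box\neg\lc{x}_{\mbox{\it\tiny T}}\bot$, and since $T\supseteq\pa$ makes $\Box\psi\imp\Box_{\mbox{\it\tiny T}}\psi$ provable in PA, this upgrades to $\pa\proves\forall x\,\Box_{\mbox{\it\tiny T}}\neg\lc{x}_{\mbox{\it\tiny T}}\bot$, i.e., $\pa\proves{\sf CCon}_{\mbox{\it\tiny T}}$, contradicting the Sinclaire--Kurahashi result quoted above that PA does not prove ${\sf CCon}_{\mbox{\it\tiny T}}$ for any $T\supseteq\pa+{\sf Con}_{\sf PA}$.

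The main obstacle is the non-provability half of (a), and within it the one genuinely delicate point is matching the two provability predicates: the selector only certifies \emph{PA}-proofs, so passing from the scheme to ${\sf CCon}_{\mbox{\it\tiny T}}$, whose inner box is $\Box_{\mbox{\it\tiny T}}$, requires the monotonicity $\Box\psi\imp\Box_{\mbox{\it\tiny T}}\psi$ coming from $T\supseteq\pa$, after which the cited non-provability result closes the argument. I would also verify carefully that the definition of ``proof of a scheme'' legitimately lets us existentially quantify the primitive recursive selector to reach $\forall x\,\Box(\cdots)$ inside PA; this, together with the two weak-provability checks and the reading of ${\sf Con}_{\sf PA}$ and (\ref{ConS}), is routine bookkeeping. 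The pleasant structural feature is that the same consistency-scheme device separates each adjacent pair of notions: PA's own consistency scheme for (b) and that of $\pa+{\sf Con}_{\sf PA}$ for (a).
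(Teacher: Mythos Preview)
Your argument is correct, and part (b) is exactly the paper's: the scheme $\{\neg\lc{x}\bot\}$ is provable by (\ref{ConS}) and not strongly provable by G\"odel's Second Incompleteness Theorem.

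For part (a) you take a closely related but technically different witness from the paper. The paper stays entirely inside \pa\ and uses the scheme $\{\neg\lc{x}\Box\bot\}$, i.e., ``$x$ is not a \pa-proof of the sentence $\Box\bot$.'' Weak provability holds because \pa\ (being consistent) does not prove $\Box\bot$, so each instance is a true primitive recursive sentence; non-provability follows because a selector would give $\pa\proves\forall x\,\Box\neg\lc{x}\Box\bot$, which is exactly formula (\ref{?!?}) that the Kurahashi--Sinclaire argument in Section~3.3 refutes directly. Your choice $\{\neg\lc{x}_{\mbox{\it\tiny T}}\bot\}$ with $T=\pa+{\sf Con}_{\sf PA}$ is morally the same object via the deduction theorem (a $T$-proof of $\bot$ corresponds to a \pa-proof of $\Box\bot$), and your route to the contradiction lands on ${\sf CCon}_{\mbox{\it\tiny T}}$ as literally stated rather than on (\ref{?!?}). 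The price is the extra monotonicity step $\Box\psi\imp\Box_{\mbox{\it\tiny T}}\psi$, which is indeed provable in \pa\ for this finitely axiomatized extension; the gain is that the example reads cleanly as ``the consistency scheme one theory up,'' matching the structural picture you describe. Both approaches hinge on the same Kurahashi--Sinclaire observation, just invoked at slightly different points.
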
 
\begin{proof}
a)  Consider the \pa-consistency scheme 
\[ \{\neg\lc{x}\bot\}.\] 
As we have shown, this scheme is provable in \pa. 
By G\"odel's Second Incompleteness Theorem, $\pa\not\proves\forall x\ \neg\lc{x}\bot$, hence the scheme $\{\neg\lc{x}\bot\}$ is not strongly provable in \pa. 

b) The scheme $$\{\neg\lc{x}\Box\bot\}$$ is weakly provable since  $\neg\lc{n}\Box\bot$
is true for each $n=0,1,\ldots$, and hence provable in \pa\ as a true primitive recursive sentence. 
Suppose the scheme $\{\neg\lc{x}\Box\bot\}$ is provable. Then for some selector term $t$, 
\[ \pa\proves\forall x [\lc{t(x)}\neg\lc{x}\Box\bot]. \]
By easy \pa-reasoning, we get 
\[ \pa\proves\forall x\Box\neg\lc{x}\Box\bot,\] 
which is impossible by the aforementioned Kurahashi-Sinclaire observation. 

The scheme $\{\neg\lc{x}\Box\bot\}$ is an example of a property, we suggest calling it \textit{Unprovability of Inconsistency}, which is not provable in \pa, but each of its instances is provable in \pa.
\end{proof}

Neither strong nor weak provability of schemes coheres with Hilbert's consistency program. We argue that provability of schemes is a better fit here. 
Conceptually, proofs of schemes represent an overlooked class of valid reasoning in informal arithmetic in which the induction instances involved are not bounded. This reasoning is {\it de facto} used when proving \textit{Complete Induction} or similar properties, but has been lacking an appropriate formalization within the \pa\ framework. We are now filling this gap with the general theory of proving schemes in \pa. 

It is easy to observe that the following three properties hold of proofs of schemes. First, proofs of schemes are finite syntactic objects. Second, the proof predicate
\[ \mbox{\it ``$\langle t, p\rangle$ is a proof of scheme $\{\varphi\}$"}
\]  
is decidable. Third, the set of provable schemes is recursively enumerable.


There are additional subtleties in proving consistency presented as a scheme. Consider a primitive recursive function $v$ which given $n$ returns a \pa-proof of $\neg\lc{n}\bot$:
\begin{quote}
{\it
Given $x$, check whether $x$ is a proof of $\bot$ in \pa. If ``yes," then put $v(x)$ to be a simple derivation of $\neg\lc{x}\bot$ from $\bot$. If ``no," then use provable $\Sigma_1$-completeness and put $v(x)$ to be a constructible derivation of $\neg\lc{x}\bot$ in \pa. 
}
\end{quote}
Let $p$ be an obvious \pa-proof of $\forall x[\lc{v(x)}\neg\lc{x}\bot]$. Consider two questions: 
\begin{enumerate}
\item Is $\langle v, p\rangle$ a proof of the scheme $\{\neg\lc{x}\bot\}$?
\item Is $\langle v, p\rangle$ a proof of \pa-consistency? 
\end{enumerate}

The answer to (1) is obviously ``yes" since $\langle v, p\rangle$ fits the definition of a proof of the scheme $\{\neg\lc{x}\bot\}$ in \pa.

The answer to (2) is ``no." This question should be understood as whether $v(n)$, as a contentual argument, proves that a \pa-derivation $n$ does not contain $\bot$. The answer to this question is obviously negative: $v(n)$ only tells us that if $n$ contains $\bot$, we would still be able to offer a fake proof of $\neg\lc{n}\bot$. This is not a consistency proof. 
\medskip\par
Proof theorists should not feel disappointed to not see a clean, formal criterion of what counts as a consistency proof. After all, logicians have not had a clean formal criterion of what counts as a \textit{consistency formula}. Researchers have used contentual judgements to rule out suspect formulas, such as the Rosser consistency formula, for years.


\section{Discussion}


There is a long history of suggestions of how to bypass G\"odel's Second  Incompleteness Theorem (cf. \cite{Det79,Det86,Det01,Zach16}). Our approach to proving consistency appears to be novel. It vindicates, to some extent, Hilbert's program of establishing the consistency of formal theories. Thinking of proving consistency of a theory by means formalizable in the same theory should no longer be taboo. As we have seen, our proof of \pa-consistency in informal arithmetic is clearly formalizable in \pa.


By no means are we casting doubt upon G\"odel's Incompleteness Theorems, ordinal analysis, \textit{etc.}; these are the classics of mathematical logic. 
However, representing consistency with the arithmetical formula ${\sf Con}_{\sf PA}$ has distorted the original notion of consistency.
It appears the unprovability of ${\sf Con}_{\sf PA}$ is caused by a technicality, namely the internalized universal quantifier, rather than by deeper foundational problems. This effect is visible in the proof-theoretical format: a provability of ${\sf Con}_{\sf PA}$ would mean that for some proof $p$, 
\[ p:\forall x(\neg\lc{x}\bot),\]
i.e., $p$ should be a proof of $\neg\lc{x}\bot$ for both standard and nonstandard $x$'s, which is impossible since, in some models, $c:\!\bot$ holds for some nonstandard $c$.
Once consistency is considered in its original combinatorial form without unnecessary \textit{a priori} internalization, the consistency property can be viewed as represented by a scheme $$\{\neg\lc{x}\bot\},$$ and its proof in \pa\ is 
\[ p:\forall x[t(x)\!:\!\neg\lc{x}\bot].\]
Since the selector $t(x)$ is a primitive recursive function, it is total, and for each standard $n$ returns a standard proof $t(n)$ of $\neg\lc{n}\bot$, which is sufficient to claim consistency. The abstract idea of using selectors for proving schemes can be traced back to so-called \textit{Kreisel's second clause} in the BHK provability semantics for intuitionistic logic, formalized in \cite{Art01a}, cf. \cite{AF19}.

It is instructive to compare our consistency proof with the well-known infinitary $\omega$-rule 
\[ \ \ \ \ \ \ \ \ \ \ \varphi(0),\ \varphi(1),\ \varphi(2),\ldots\ /\ \forall x\varphi(x). \]
The $\omega$-rule is too strong for our purposes: it steps outside \pa, whereas we stay inside \pa. In fact, we prove the entire collection of premises of $\omega$-rule
\[ \neg\lc{0}\bot, \neg\lc{1}\bot,\neg\lc{2}\bot,\ldots  \]
in a finite way, but not the conclusion $\forall x\ \neg\lc{x}\bot$.

Is informal arithmetic stronger than \pa?
For \pa, we are using Hilbert's notion of proof, i.e., finite derivations in (finite fragments of) \pa. This is probably sufficient for deriving in \pa\ {\em sentences} provable in informal arithmetic. However, \pa-derivations are too weak to express arithmetical reasoning that refers to regular but unbounded collections of induction instances, cf. the proofs in informal arithmetic of \textit{Complete Induction} and \textit{Consistency}. 
Overall, as hinted by G\"odel, informal arithmetic enjoys more freedom for proving than formal \pa. \pa\ with the proofs of schemes capability is a better approximation to informal arithmetic, but this is hardly the end of the story. In summary, we have no objections to the  \textit{Formalization Principle} applied to proofs of sentences. However, the \textit{${\sf Con}_{\sf PA}$ as Consistency Principle} has been refuted. 

Our findings free foundations of verification from some ``impossibility" limitations. Imagine that we want to verify the property 
\begin{equation}\label{ver}
\forall x [t(x)=0]
\end{equation}
for some total computable term $t(x)$ by proving (\ref{ver}) in \pa. 
Given G\"odel's Second Incompleteness Theorem, in addition to a formal proof of (\ref{ver}) in \pa, one needs some consistency assumptions about \pa\ to conclude that $t(n)$ returns $0$ for each $n=0,1,2,\ldots$. Since it was assumed that these additional assumptions could not be verified in \pa, this, strictly speaking, left a certain foundational loophole.
In our framework, \pa\ proves its consistency, and hence these additional meta-assumptions can be dropped. Proving (\ref{ver}) formally in \pa, as we have seen, is certified as a self-sufficient verification method.

\section{Thanks}

Thanks to
\medskip\par
Lev Beklemishev, Sam Buss, John Connor, Michael Detlefsen, Hartry Field, Mel Fitting, Richard Heck, Carl Hewitt, Harold Hodes, John H. Hubbard, Dongwoo Kim, Karen Kletter, Vladimir Krupski, Taishi Kurahashi, Hirohiko Kushida, Roman Kuznets, Yuri Matiyasevich, Richard Mendelsohn, Eoin Moore, Andrei Morozov, Anil Nerode, Elena Nogina, Vladimir Orevkov, Fedor Pakhomov, Rohit Parikh, Vincent Peluce, Brian Porter,  Graham Priest, Michael Rathjen, Andrei Rodin, Chris Scambler, Luke Serafin, Sasha Shen, Richard Shore, Morgan Sinclaire, Stanislav Speransky, Thomas Studer, Albert Visser, Dan Willard, Noson Yanofsky, and many others.

\end{document}